\newtheorem{theorem}{Theorem}[section]
\newtheorem{lemma}[theorem]{Lemma}
\newtheorem{definition}[theorem]{Definition}
\newtheorem{corollary}[theorem]{Corollary}
\newtheorem{example}[theorem]{Example}
\newtheorem{claim}[theorem]{Claim}
\newcommand\RR{{\Bbb R}}
\newcommand\CC{{\Bbb C}}
\newcommand\NN{{\Bbb N}}
\newcommand\ZZ{{\Bbb Z}}
 \newcommand\HH{{\Bbb H}}
\newcommand{\note}[2][\null]{%
  \marginpar{\renewcommand{\baselinestretch}{1}\vspace{-1em}\hrule\vspace{3pt}%
  \tiny\raggedright{#2\ifx#1\null\else\\\hfill---
  {\em #1}\fi}\vspace{1.5em}}
  }
\begin{document}
\title{Identification of  Besov  spaces via  Littlewood-Paley-Stein Type  $g$-functions}

\author{Azita Mayeli \\
\address{City University of New York, 
  {\tt amayeli@gc.cuny.edu} 
 }}
\keywords{Inhomogeneous Besov space,  Littlewood-Paley-Stein type $g$-functions, square functions, symmetric diffusion semigroups, bandlimited functions\\  The research was supported by PSC-CUNY (The Professional Staff Congress - City University of New York) 
Grant, Tradition B
}
\subjclass[2010]{43, 46 (primary), and 41  (secondary)}
\date{\today}

   \begin{abstract}
We use   Littlewood-Paley-Stein type $g$-functions  (also called generalized square functions)    associated to  symmetric diffusion semigroups     to obtain a characterization of   inhomogeneous abstract Besov spaces on the abstract Hilbert spaces.  Then we apply our results  for the abstract Besov spaces defined through  the Poisson and Gauss-Weierstrass semigroups. 
      \end{abstract}

  \maketitle
  
 \section{Introduction}\label{introduction}
 

Let $\{T_t\}_{t\geq 0}$ be  a symmetric diffusion semigroup on a Hilbert space $\mathcal H$.  (See  Section \ref{notations}  for the definition.). 
For $1\leq q<\infty$, $\alpha>0$, 
the   {\it inhomogenous abstract Besov space} $B_q^\alpha: = B_q^\alpha(\mathcal H)$ is the space of  all  functions $f\in \mathcal H$ for which
  \begin{align}\notag
   \int_0^1 \left(s^{-\alpha}  \Omega_{r}(s,\>f )\right)^q   \ \frac{ds}{s} <\infty. 
  \end{align}

\noindent Here, $\Omega_{r}(s,\>f )$ is 
 the  {\it  modulus of continuity}  for    $r\in \NN$ with $r\geq \alpha$ and is  defined by 
\begin{equation}\notag
\Omega_{r}(s,\>f )=\sup_{0<\tau\leq s} \| \left( I-T_\tau \right)^r f\|~.
\end{equation}

 \noindent 
  The space $B_q^\alpha$, $1\leq q<\infty$, is a  Banach space with the norm

  \begin{align}\label{Besov-norm-via-modulus}
 \|f\|_{B_q^\alpha} =  \|f\| + \left( \int_0^1 \left(s^{-\alpha}  \Omega_{r}(s,\>f )\right)^q   \ \frac{ds}{s}\right)^{1/q}.
  \end{align}
  
 We use the standard convention for the definition of the norm when $q=\infty$. 
  Notice that the Besov norm is independent of the choice  of  $r$ due to the  monotonicity of the modulus of continuity $\Omega_r$. Therefore, for fixed $q$ and $\alpha$,  the   definition  of Besov space  is independent of the choice of $r$.  
  \\

In  our previous paper \cite{Mayeli-PAMS},  we proved    the norm in   (\ref{Besov-norm-via-modulus})  can be  expressed  in terms of ``smooth\rq\rq{} and ``band-limited\rq\rq{} functions using the  Littlewood-Paley decomposition. 
In this  paper  we show  this norm  is also equivalent to a norm which can be expressed  in terms of  {Littlewood-Paley-Stein type $g$-functions}  as  we will explain.    \\ 

  For  any given  $m\in \NN$, $1\leq q<\infty$,   $\beta\in\RR$ and  a diffusion semigroup $\{T_t\}_{t\geq 0}$    on a Hilbert space  $\mathcal H$,  
  we denote a  bounded map from $\mathcal H$ to $\mathcal H$ 
   by  $\mathcal G_{m,q,\beta}$ 
  and call it  a  {\it Littlewood-Paley-Stein type $g$-function}  if the following property holds:

 \begin{align}\label{$g$-function}
 \|\mathcal G_{m,q,\beta}(f)\|^q\asymp
 \int_{0}^\infty t^{(m-\beta)q} \left\|\cfrac{\partial^m}{\partial t^m}T_tf\right\|^q \ \frac{dt}{t} . 
\end{align}

 
   

  
\noindent  For $q=\infty$ we use the standard convention.   \\ 


The functions   $\mathcal G_{m,q,\beta}$ can be interpreted as  a generalized version of   {\it $g$-functions} or {\it square functions} in abstract form.  
 In the classical Littlewood-Paley theory, various $g$-functions   have played a central role.   Typical examples  are 
 \begin{align}\label{exmpl1}
 g_k(f)(x)= \left(\int_0^\infty |t^k\cfrac{\partial^k}{\partial t^k} T_t f(x)|^2 \frac{dt}{t}\right)^{1/2}
 \end{align}
and 
 \begin{align}\label{exmpl2}
 g_k(f)(x)= \left(\int_0^\infty |(tA)^k\cfrac{\partial^k}{\partial t^k} T_t f(x)|^2 \frac{dt}{t}\right)^{1/2}, 
 \end{align}
  where $f\in L^p$, $A$ is the  infinitesimal operator of the semigroup and $k\in \NN$. (Note, for $p=2$, if we take the $L^2$-norm of (\ref{exmpl1}), we arrive at \ref{$g$-function}) with $q=2$, $\beta=0$ and $m=k$. Therefore, our definition of a Littlewood-Paley-Stein type $g$-function on an abstract Hilbert space  as a generalized  version of a $g$-function makes intuitive sense.)  \\

The $g$-functions for the  generators of semigroups grew out of classical harmonic analysis and the Littlewood-Paley theory. They were first developed in E. M. Stein\rq{}s classical book 
 \cite{Stein70}. 
Also  see the monograph \cite{Frazier-Jawerth-Weiss91} for  when $k=1$ and    \cite{Meda95} for  any $k\in \NN$.  As it has been shown in  \cite{Frazier-Jawerth-Weiss91},  the $g$-functions associated to the Possion semigroup  can be  applied to determine  the bounded solutions of a Dirichlet problem on $\RR^n\times \RR^+$ with  bounded boundary value function. 
Note   in his  work \cite{Meda95}, the author proves the $L^p$-boundedness of $g$-functions.  These functions  here are associated to $(tA)^s T_t$, where $s>0$ and  $A$ is the infinitesimal operator for the semigroup $\{T_t\}_{t\geq 0}$.   
 Another principal motivation for introducing the $g$-functions  is   to provide new equivalent norms for
the spaces $L^p$, which make the boundedness properties of various operators derived from the
semigroup $T_t$ more transparent. For further applications of this form,  for example  in unconditionally martingale difference spaces,  we refer  the reader to  \cite{hytoenen}, where  $\mathcal H=\RR^n$, and $\beta=0$.  \\

The main result of this paper,  which characterizes   the inhomogenous abstract Besov spaces in terms of the  Littelwood-Paley-Stein type  $g$-functions,  reads as follows. 

\begin{theorem}\label{Stein-LPW-decomp} Let $\{T_t\}_{t\geq 0}$ be a symmetric diffusion semigroup. Then for  $1<q<\infty$,  $\alpha>1/2$, and $m\in \NN$ with  $m>2\alpha$,   there is    $\beta:=\beta(\alpha)\in\Bbb R$ such that

 \begin{align}\label{Stein-LPW-Besov-norm} 
 \|f\|_{B_q^\alpha} \asymp \|f\| +   \|\mathcal G_{m,q,\beta}(f)\|, 
 \end{align}
 or equivalently,  

 \begin{align}\label{Stein-LPW-Besov-norm2}
 \|f\|_{B_q^\alpha}  \asymp   \|f\| +  \left(\int_0^\infty t^{(m-\beta)q} \left\|\cfrac{\partial^m}{\partial t^m}T_tf\right\|^q \  \frac{dt}{t}\right)^{1/q} 
 \end{align}
 with  $m>\beta$. 
 The result holds for $q=\infty$ with  the modification of the  definition. 
\end{theorem}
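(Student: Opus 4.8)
\emph{Overview.} The plan is to use the spectral theorem to rewrite everything in terms of bounded functions of the generator, to observe that the matching exponent is $\beta=\alpha$, and then to prove the norm equivalence by two one-sided estimates, each obtained by representing the relevant operators through explicit \emph{sign-definite} integral kernels and invoking Minkowski's integral inequality on the multiplicative group $\big((0,\infty),\frac{dt}{t}\big)$ (no orthogonality is available since $q$ may differ from $2$).

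\emph{Setup.} Being self-adjoint, strongly continuous and contractive, $\{T_t\}$ is of the form $T_t=e^{-tA}$ for a nonnegative self-adjoint operator $A$, so that $(I-T_\tau)^r$, $T_t$ and $\partial_t^mT_t=(-1)^mA^me^{-tA}$ are bounded Borel functions of $A$; from $\|(I-T_\tau)^rf\|^2=\int_0^\infty(1-e^{-\tau\lambda})^{2r}\,d\|E_\lambda f\|^2$ and the monotonicity of $\tau\mapsto(1-e^{-\tau\lambda})^r$ one gets $\Omega_r(s,f)=\|(I-T_s)^rf\|$. Since the Besov norm is independent of $r\ge\alpha$, I take $r=m$ throughout (admissible since $m>2\alpha>\alpha$). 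Comparing the action of the two sides on a single spectral band of frequency $\lambda$ forces $\beta=\alpha$ (for $\lambda\ge1$ both sides weigh the band by $\lambda^\alpha$, the additive $\|f\|$ accounting for the low frequencies; this is transparent when $q=2$, where Plancherel turns both sides into weighted integrals against $d\|E_\lambda f\|^2$). With $r=m$ and $\beta=\alpha$ fixed, it remains to prove (i) $\int_0^\infty t^{(m-\alpha)q}\|A^me^{-tA}f\|^q\frac{dt}{t}\lesssim\|f\|^q+\int_0^1 s^{-\alpha q}\|(I-T_s)^mf\|^q\frac{ds}{s}$, and (ii) $\int_0^1 s^{-\alpha q}\|(I-T_s)^mf\|^q\frac{ds}{s}\lesssim\int_0^\infty t^{(m-\alpha)q}\|A^me^{-tA}f\|^q\frac{dt}{t}$.

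\emph{Estimate (i).} I use the symbol factorization $\lambda^me^{-t\lambda}=t^{-m}M(t\lambda)(1-e^{-t\lambda})^m$, where $M(x)=x^me^{-x}(1-e^{-x})^{-m}$ is continuous on $(0,\infty)$ with $M(0^+)=1$ and $M(+\infty)=0$, hence bounded. By the functional calculus $A^me^{-tA}=t^{-m}M(tA)(I-T_t)^m$, whence $\|A^me^{-tA}f\|\le\|M\|_\infty\,t^{-m}\|(I-T_t)^mf\|$; splitting $\int_0^\infty=\int_0^1+\int_1^\infty$, the $\int_0^1$ part is at most $\|M\|_\infty^q\int_0^1 s^{-\alpha q}\|(I-T_s)^mf\|^q\frac{ds}{s}$, and on $(1,\infty)$ the crude bound $\|A^me^{-tA}\|_{\mathcal H\to\mathcal H}\le\sup_{\lambda\ge0}\lambda^me^{-t\lambda}\lesssim t^{-m}$ gives $\int_1^\infty t^{(m-\alpha)q}\|A^me^{-tA}f\|^q\frac{dt}{t}\lesssim\|f\|^q\int_1^\infty t^{-\alpha q}\frac{dt}{t}<\infty$, using $\alpha>0$.

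\emph{Estimate (ii) (reconstruction).} Iterating $I-T_s=\int_0^s Ae^{-vA}\,dv$ and using the semigroup law — the interchange of iterated integrals being legitimate on the spectral side because $\int_0^{ms}p_{m,s}(v)\lambda^me^{-v\lambda}\,dv=(1-e^{-s\lambda})^m\le1$ — gives the B-spline representation
\[
(I-T_s)^mf=\int_0^{ms}p_{m,s}(v)\,A^me^{-vA}f\,dv,\qquad p_{m,s}(v)=s^{m-1}B_m(v/s),
\]
where $B_m$ is the order-$m$ B-spline, supported on $(0,m)$ with total mass $1$ and $B_m(w)=w^{m-1}/(m-1)!$ on $[0,1]$. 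Hence $\|(I-T_s)^mf\|\le s^m\int_0^m B_m(w)\,\|A^me^{-swA}f\|\,dw$, so
\[
\int_0^1 s^{-\alpha q}\|(I-T_s)^mf\|^q\frac{ds}{s}\ \le\ \int_0^1 s^{(m-\alpha)q-1}\Big(\int_0^m B_m(w)\,\|A^me^{-swA}f\|\,dw\Big)^q ds;
\]
Minkowski's integral inequality in $L^q\big((0,1),s^{(m-\alpha)q-1}\,ds\big)$ against the probability measure $B_m(w)\,dw$, followed by the dilation $t=sw$ in each inner integral, bounds this by $\big(\int_0^m B_m(w)\,w^{-(m-\alpha)}\,dw\big)^q\int_0^\infty t^{(m-\alpha)q}\|A^me^{-tA}f\|^q\frac{dt}{t}$, and the $w$-integral is finite because $B_m(w)\,w^{-(m-\alpha)}\sim w^{\alpha-1}$ near $0$ and $\alpha>0$.

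\emph{Conclusion, the case $q=\infty$, and the main difficulty.} Combining (i) and (ii) yields $\|f\|_{B_q^\alpha}^q\asymp\|f\|^q+\int_0^\infty t^{(m-\alpha)q}\|A^me^{-tA}f\|^q\frac{dt}{t}$, which is (\ref{Stein-LPW-Besov-norm2}) with $\beta=\alpha$ (and $m>\beta$ since $m>2\alpha$), hence (\ref{Stein-LPW-Besov-norm}) by the defining relation (\ref{$g$-function}) for $\mathcal G_{m,q,\beta}$; for $q=\infty$ the same argument applies with $L^\infty(\frac{ds}{s})$ in place of $L^q(\frac{ds}{s})$ and Minkowski's integral inequality replaced by the triangle inequality for the essential supremum. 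I expect (ii) to be the delicate point: with no Plancherel available for $q\ne2$, everything rests on producing the explicit sign-definite kernels $p_{m,s}$ (and $M$ for (i)), after which the mixed $L^q(\frac{dt}{t};\mathcal H)$ norms are controlled by Minkowski's inequality and dilations alone; one also has to track the convergence of the auxiliary integrals, where — in the form stated — the hypotheses $\alpha>1/2$ and $m>2\alpha$ leave ample room (the bare computations above use only $\alpha>0$ and $m\in\NN$ with $m\ge\alpha$, so the stronger assumptions are presumably inherited from the Littlewood--Paley/band-limited characterization of $B_q^\alpha$ in \cite{Mayeli-PAMS}, through which the proof may also be routed).
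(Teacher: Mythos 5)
Your proof is correct, and it takes a genuinely different route from the one in the paper. The paper's argument is routed entirely through the Littlewood--Paley characterization (\ref{normequiv}) from \cite{Mayeli-PAMS}: it decomposes $f=\sum_j f_j$ into dyadic band-limited pieces, introduces the auxiliary operator $Kf=\int_0^\infty t^{M-\beta}\bigl(\partial_t^m T_t\bigr)^2 f\,\frac{dt}{t}$, and compares $\|f_j\|$ with $\|Kf_j\|$ from above and below via operator-norm estimates of $\partial_t^m T_t\,\hat\psi_j(A)$ on the band $[2^{j-1},2^{j+1}]$ (Lemmas \ref{fundamental estimation}--\ref{lower bound theorem}); this is precisely where the hypotheses $\alpha>1/2$, $q>1$, $m>2\alpha$ and the unspecified parameters $M$, $\beta$ enter. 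You instead work directly from the modulus-of-continuity definition (\ref{Besov-norm-via-modulus}), replacing the dyadic decomposition by two sign-definite kernel identities --- $A^me^{-tA}=t^{-m}M(tA)(I-T_t)^m$ for one inequality and the B-spline representation $(I-T_s)^m=\int_0^{ms}p_{m,s}(v)\,A^me^{-vA}\,dv$ for the other --- together with Minkowski's integral inequality on $\bigl((0,\infty),\frac{dt}{t}\bigr)$. This buys several things the paper's route does not deliver: the value $\beta=\alpha$ is identified explicitly rather than merely asserted to exist; the argument works for all $\alpha>0$, $1\le q\le\infty$ and any integer $m>\alpha$, so the restrictions in the statement are seen to be artifacts of passing through the band-limited characterization; and it is self-contained, with no dependence on \cite{Mayeli-PAMS}.

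Two small remarks. First, in the B-spline step the Bochner integral $\int_0^{ms}p_{m,s}(v)\,A^me^{-vA}f\,dv$ need not converge absolutely for every $f\in\mathcal H$ and every $s$ (near $v=0$ the crude bound is $p_{m,s}(v)\|A^me^{-vA}f\|\lesssim v^{-1}$); but all you actually use is the inequality $\|(I-T_s)^mf\|\le s^m\int_0^m B_m(w)\|A^me^{-swA}f\|\,dw$, which is vacuous when the right-hand side is infinite and follows from the weak form of the identity plus Fubini when it is finite, so nothing is lost --- it would be worth saying this explicitly. Second, your $\beta=\alpha$ is the correct value for the intrinsic normalization, in which the Besov space (\ref{Besov-norm-via-modulus}) and the $g$-function are built from the \emph{same} semigroup; the value $\beta=\alpha/2$ appearing in the paper's Gauss--Weierstrass example measures smoothness against $\sqrt{-\Delta}$ rather than against the generator of the heat semigroup itself, so there is no contradiction, but the discrepancy is worth flagging when comparing with the examples.
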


The definition of Besov spaces in terms of semigroups and modulus of continuity for abstract Hilbert spaces was introduced, for example,  by  Lion  (\cite{Lion}).  
   There is a large amount of work in the study of Besov spaces and their characterizations in terms of Littlewood-Paley $g$-functions   
 for when $\mathcal H$ in $B_q^\alpha(\mathcal H)$ is replaced by $L^p(X)$ space. 
   For example, see  \cite{Grigor-Liu-heat semi,Bui-Doung-Yan12,Trieb1}  for  a characterization of $B_q^\alpha(L^p)$  
   in terms  of the Weierstrass or  heat semigroup $T_t=e^{-Lt}$ (convolution with the heat kernel) associated to  a self-adjoint positive definite operator  $L$ on $L^2$.\\  



The organization of this paper is as follows: After some historical comments in Section \ref{historical comments}, in Section \ref{notations} we introduce some notations and definitions, including a short description of   our previous results in  \cite{Mayeli-PAMS}.  
  In Section \ref{main results} we  prove few key lemmas and the main result in   Theorem \ref{Stein-LPW-decomp}.  Finally we apply our results to illustrate Littlewood-Paley-Stein type $g$-functions   associated to  the Poisson  and Gauss-Weierstrass semigroups.

  
\section{ Historical comments on  Besov spaces}\label{historical comments}

In the classical setting,  Besov spaces  $B_q^\alpha(L^p)$  are the set of all functions in $L^p$ with smoothness degree $\alpha$ where their  (quasi)norm is  controlled by $q$. These spaces appear in many subfields of analysis and applied mathematics and have two types.   The definition of  one type uses the Fourier transform (for example see \cite{Peetre,Tr3}), while the other uses the modulus of continuity or smoothness. The spaces  defined by smoothness are  more practical in many areas of applied analysis, such as in approximation theory and the decomposition of signals  (\cite{D-P,Donoho-Johnston1995,DeVore-Popov88}). \\

For application  purposes,   
 it is natural to decompose a Besov function  into simple building blocks and  as a result to reduce the study of functions to the study of only  the elements in the decomposition.   Wavelet and frame theory have played a key role to achieve this goal.   In the classical level, this kind of decomposition in terms of \lq\lq{}smooth  wavelets\rq\rq{}
using spectral theoretic approach was proved in  \cite{FJ}. 
  A unified characterizations of Besov spaces in terms of atomic decomposition using a  group representation theoretic approach was given
by Feichtinger and Gr\"ochenig (\cite{FG}).  
New results in this direction in the context of Lie groups and homogeneous manifolds  were recently published in \cite{CO1,CMO1,Fueh,FM1}, and \cite{gm1}-\cite{gm4}. For the  classification of Besov spaces on compact Riemannian manifolds using continuous and time-frequency localized wavelets with higher vanishing moments we invite the reader to see \cite{gm2,gm3}. 
  For  other equivalent definitions of these spaces in terms of abstract wavelets and  as interpolation spaces between Hilbert spaces and Sobolev spaces, for example,   see the papers \cite{KPS,Tr3}.   \\

     As we mentioned earlier, 
 in this paper we present a description of $B_q^\alpha(\mathcal H)$
 in term of   $g$-functions of Littlewood-Paley-Stein type. 
 While the idea behind such an identification is simple, the  proofs are very technical and the  main difficulties arise when we replace $L^2$    by any   Hilbert space $\mathcal H$. 

\section{Preliminaries and Definitions}\label{notations}
 
Let $\mathcal H$ be a Hilbert space and  $\{T_t\}_{t>0}$ be a semigroup on $\mathcal H$. We set    $T_0=I$.   For any $f\in \mathcal H$, all $T_t f$ belong to $\mathcal H$  and 
 $\lim_{t\to 0^+} T_tf = f$ (\cite{Rudin}). 
We say $\{T_t\}_{t\geq 0}$ is  {\it symmetric diffusion} if it satisfies  the following conditions. 
  \begin{enumerate}
  \item   $T_t$ are contractions on $\mathcal H$, i.e.,  $\|T_tf\|\leq  \|f\|$ for all $f\in \mathcal H$. 
  \item  $T_t$ are symmetric, i.e., each $T_t$ is self-adjoint on $\mathcal H$. 
   \item $T_t$ are positivity preserving, i.e., 
   $T_tf\geq 0$ if $f\geq 0$.  
\end{enumerate}


 \noindent Symmetric diffusion semigroups occur often in analysis. For  examples of this type    see \cite{Stein70}. \\

Let $A$ be the infinitesimal generator of the semigroup $\{T_t\}_{t\geq 0}$ with domain $\mathcal D(A)$. By  the definition of   Besov spaces, the following inclusions hold (\cite{Berens-Butzer64}):
 \begin{align}\notag
 \mathcal D(A)\subseteq B_q^\alpha\subseteq \mathcal H . 
 \end{align} 

 \noindent $A$  is a     densely  defined closed linear  operator on $\mathcal H$ such that 
  \begin{align}\label{limit}
  \lim_{t\to 0^+} \left\|\cfrac{T_tf -f}{t} - Af\right\|= 0 ,   \quad \quad  \forall \ f\in\mathcal H .
  \end{align}

 
  \noindent By the functional calculus,  
  $T_tf=e^{tA}f$  for all  $f\in\mathcal H$ and   $A$ has a representation
\begin{align}\label{spectral representation}
A= \int_0^\infty \lambda d\mu_{\lambda} ~,
\end{align}
where $d\mu_{\lambda}$ is a projection valued measure (\cite{ReedSimon}). This implies that for any  $f\in \mathcal D(A)$  and $g\in  \mathcal H$  
 \begin{align}\label{inner product} 
 \langle A f, g\rangle = \int_0^\infty \lambda~  d(\mu_\lambda f, g). 
 \end{align}
The inner product  induces an equivalent definition for the 
  domain of  $A$  given by 
  $$\mathcal D(A)=\left\{ f\in  \mathcal H : ~~~ \|A f\|^2:=\int_0^\infty \lambda^2 d(\mu_\lambda f, f)<\infty\right\}.$$
   By the density, the inner product (\ref{inner product}) also holds for all $f\in \mathcal H$. 
 As a result of   (\ref{limit}) and  (\ref{spectral representation}), 
   %
   %
%
the operator  $T_t$   has a spectral decomposition 
 $$T_t= \int_0^\infty e^{-\lambda t} d\mu_\lambda,  \quad \quad  t>0 .   $$

\noindent Thus 

 $$\cfrac{\partial^m}{\partial t^m}T_t= (-1)^m\int_0^\infty \lambda^m e^{-\lambda t} d\mu_\lambda ~ , \quad  \quad\forall \ \  m\in \NN. $$



We say $f\in\mathcal H$ is a {\it  Paley-Wiener}  or {\it bandlimited} function with respect to the operator $A$ and the projection valued measure $\mu_\lambda$ if $d(\mu_\lambda f, f)$ is supported in the interval $[a, b]$, $0<a<b<\infty$, i.e., 
 \begin{align}\notag
 \langle A f, f\rangle  = \int_a^b \lambda d(\mu_\lambda f, f) .
 \end{align} 

We denote the space of such functions by $PW_{[a,b]}(A)$ and call  it   {\it Paley-Wiener space}.   (For an 
 equivalent definition of these spaces 
  using the so-called functional form of the spectral theorem see, for example, \cite{BS}.)
 A vector  $f$ in $\mathcal H$ is  smooth if  it belongs to    $ \mathcal D(A^k)$,     $\forall \  k\in \Bbb N$. It is straightforward   that
  $PW_{[a,b]}(A)  \subseteq \cap_{k\in\NN} \mathcal D(A^k)$, so every vector in  the Paley-Wiener space  is smooth.  \\
%


Let  $\{\psi_j\}_{j\geq 0}$ be a sequence of  bounded real-valued functions on $[0,\infty)$ with  ${\text supp}(\hat \psi_j) \subseteq [2^{j-1}, 2^{j+1}]$. Assume that   the following  
 resolution of identity  (a.k.a. a discrete version of Calder\'on decomposition)  holds:

\begin{align}\label{resolution of identity}
 \sum_{j=0}^\infty \hat\psi_j(\lambda)^2= 1 .
\end{align} 
For any $f\in \mathcal H$ and $ j\geq 0$,  
define   $f_j:= \hat\psi_j(A)f$. 
Therefore    by the resolution of identity and the functional calculus: 

\begin{align}\label{decomp}
f= \sum_{j=0}^\infty  \hat\psi_j(A) f_j \ .
\end{align}
Since  $\hat\psi_j(A)f\in \cap_{k\in\NN} \mathcal D(A^k)$, the
 functions  $f_j$ are  band-limited 
and  smooth. 
 Therefore (\ref{decomp}) represents a decomposition of $f$ in terms of band-limited and smooth functions. 
 In \cite{Mayeli-PAMS} we applied  (\ref{decomp})  to prove  that for 
$\alpha>1/2$ and  $1<
q\leq \infty$ 
 
\begin{equation}
\|f\|_{B_q^\alpha} \asymp  \|f\|+ \left(\sum_{j=0}^{\infty}\left(2^{j\alpha
}\|\hat\psi_j(A)f\|\right)^{q}\right)^{1/q} , \quad  \forall \ f\in \mathcal H,
\label{normequiv}
\end{equation}
 provided that  both sides  are finite. As usual,   we use   the standard modifications for $q=\infty$.   We will use the decomposition (\ref{decomp})  to  prove  the main  result in  Theorem \ref{Stein-LPW-decomp}  in this paper. \\ 
 
  %



 {\it Notation:} By  $\| \ \|_{op}$ we shall mean  the  operator norm, and we will use 
   $\preceq (\succeq)$    when the inequality    $\leq (\geq)$ holds  up  to some uniform constant. Throughout the paper, the equivalence  $\asymp$  indicates $\preceq$ and $\succeq$. By the Besov space (norm)  we shall  mean  the  abstract inhomogeneous Besov space  (norm).

\section{Proof of Theorem  \ref{Stein-LPW-decomp}}\label{main results}

 We need some key lemmas  here before we prove our main result.
  The first one follows. 
 
\begin{lemma}\label{fundamental estimation} For any $j\geq 0$, $1\leq p<\infty$ and $k, m\in \Bbb N$   
 
\begin{align}\label{xx}
\int_0^\infty  t^{pk}
 \ \left\| \cfrac{\partial^m T_{t}}{\partial t^m}\hat\psi_j(A)\right\|_{op}^p
\frac{dt}{t} \preceq \ 2^{-jp(k-m)} ~.
\end{align} 
\end{lemma}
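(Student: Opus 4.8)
The plan is to use the spectral representation of the semigroup to reduce the operator-norm estimate to a scalar estimate. Since $\hat\psi_j(A)$ is a spectral multiplier and $\frac{\partial^m}{\partial t^m}T_t = (-1)^m\int_0^\infty \lambda^m e^{-\lambda t}\,d\mu_\lambda$, the operator $\frac{\partial^m T_t}{\partial t^m}\hat\psi_j(A)$ is again a spectral multiplier, namely multiplication by $(-1)^m\lambda^m e^{-\lambda t}\hat\psi_j(\lambda)$. Hence its operator norm is
\begin{align}\notag
\left\|\cfrac{\partial^m T_t}{\partial t^m}\hat\psi_j(A)\right\|_{op} = \sup_{\lambda\in\operatorname{supp}(\hat\psi_j)} \lambda^m e^{-\lambda t}|\hat\psi_j(\lambda)| \leq \|\hat\psi_j\|_\infty \sup_{\lambda\in[2^{j-1},2^{j+1}]}\lambda^m e^{-\lambda t}.
\end{align}
Here I use that $\hat\psi_j$ is bounded and supported in $[2^{j-1},2^{j+1}]$; the functions $\psi_j$ are uniformly bounded (this is the standard Littlewood-Paley normalization implicit in the setup, and in any case $\|\hat\psi_j\|_\infty\le 1$ follows from the resolution of identity (\ref{resolution of identity})), so the constant is uniform in $j$.

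Next I would estimate the scalar supremum. On $[2^{j-1},2^{j+1}]$ one has $\lambda\asymp 2^j$, so $\lambda^m e^{-\lambda t}\preceq 2^{jm}e^{-2^{j-1}t}$. Plugging this into the left-hand side of (\ref{xx}) gives
\begin{align}\notag
\int_0^\infty t^{pk}\left\|\cfrac{\partial^m T_t}{\partial t^m}\hat\psi_j(A)\right\|_{op}^p\frac{dt}{t} \preceq 2^{jmp}\int_0^\infty t^{pk}e^{-p2^{j-1}t}\frac{dt}{t}.
\end{align}
The remaining integral is evaluated by the substitution $u = p2^{j-1}t$: it equals $(p2^{j-1})^{-pk}\int_0^\infty u^{pk}e^{-u}\frac{du}{u} = (p2^{j-1})^{-pk}\Gamma(pk)$, which is $\asymp 2^{-jpk}$ with a constant depending only on $p$ and $k$. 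Combining, the whole expression is $\preceq 2^{jmp}2^{-jpk} = 2^{-jp(k-m)}$, as claimed.

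I do not anticipate a serious obstacle here — the lemma is a soft consequence of the spectral calculus and an elementary Gamma-function computation. The one point requiring a little care is justifying that the operator norm of a spectral multiplier $\phi(A)$ equals $\sup_{\lambda\in\operatorname{supp}(d\mu_\lambda)}|\phi(\lambda)|$ (or is at least bounded by the sup of $|\phi|$ over the relevant spectral support), which is standard from the functional calculus / spectral theorem cited earlier via (\ref{spectral representation}); and ensuring the bound $\|\hat\psi_j\|_\infty\preceq 1$ uniformly in $j$, which I would record explicitly. Everything else is the substitution and the convergence of $\int_0^\infty u^{pk-1}e^{-u}\,du$, valid since $pk\ge 1>0$.
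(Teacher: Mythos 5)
Your proposal is correct and follows essentially the same route as the paper: reduce the operator norm to a scalar supremum via the functional calculus, bound it by $c\,2^{mj}e^{-2^{j-1}t}$ using the support condition on $\hat\psi_j$, and evaluate the resulting Gamma integral by the substitution $u=p2^{j-1}t$. Your extra care in recording $\|\hat\psi_j\|_\infty\le 1$ (which indeed follows from the resolution of identity (\ref{resolution of identity})) and in justifying the multiplier-norm identity is a welcome tightening of the paper's argument, but it is not a different proof.
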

\begin{proof}
By the  functional calculus we have

 $$\left\|\cfrac{\partial^m T_{t}}{\partial t^m}\hat\psi_j(A)\right\|_{op} = \sup_{\lambda>0} \left| (-\lambda)^me^{-\lambda t} \hat\psi_j(\lambda)\right| = \sup_{2^{j-1}\leq \lambda\leq 2^{j+1}} |(-\lambda)^m e^{-\lambda t} \hat\psi_j(\lambda)|\leq  c_1 \ 2^{mj}  e^{-2^{j-1}t}.   $$
 
\noindent  By applying this in (\ref{xx}): 

  $$ \int_0^\infty  t^{pk}
 \ \left\| \cfrac{\partial^m T_{t}}{\partial t^m}\hat\psi_j(A)\right\|_{op}^p
\frac{dt}{t}  \preceq 2^{jmp} \int_0^\infty  t^{pk} e^{-2^{j-1}pt} \frac{dt}{t} 
=c  2^{-jp(k-m)} ~ .
   $$
Here,  the  constants $c_1$  and $c$ are independent of  $j$ and $t$.
  This completes the proof of the lemma. 
  \end{proof}

 \begin{lemma}\label{Kf}  Let $M$and $m$ be positive integers  such that    
 $M>2m$. 
  Let $\beta\in \RR$. 
  Then the operator    $K$ defined on  $\mathcal H$ by  
$$Kf= \int_0^\infty   t^{M-\beta} \left(\cfrac{\partial^m T_{t}}{\partial t^m}\right)^2f \ \frac{dt}{t} $$
 is bounded and  for $1<q\leq \infty$ 
$$ \| Kf\| \preceq  \|\mathcal G_{m,q,\beta}(f)\| .$$
 
  \end{lemma}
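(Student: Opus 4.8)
The plan is to pass to the spectral picture, use Minkowski's integral inequality to bring the $\mathcal H$-norm inside the $t$-integral, and then split $f$ via the band-limited decomposition (\ref{decomp}), $f=\sum_{j\geq 0}\hat\psi_j(A)f_j$ with $f_j=\hat\psi_j(A)f$, so that one scalar estimate per dyadic block suffices. Writing $\left(\frac{\partial^m T_t}{\partial t^m}\right)^2=\frac{\partial^m T_t}{\partial t^m}\circ\frac{\partial^m T_t}{\partial t^m}$ and running the functional-calculus estimate exactly as in the proof of Lemma \ref{fundamental estimation}, one gets $\big\|\big(\frac{\partial^m T_t}{\partial t^m}\big)^2\hat\psi_j(A)\big\|_{op}\preceq 2^{2mj}e^{-2^{j-1}t}$, and after integrating the resulting Gamma integral in $t$ (this needs $\beta<M$, the only range in which $Kf$ is defined),
\begin{align*}
\|Kf\| \ \leq\ \sum_{j\geq 0}\left(\int_0^\infty t^{M-\beta}\left\|\Big(\cfrac{\partial^m T_t}{\partial t^m}\Big)^2\hat\psi_j(A)\right\|_{op}\frac{dt}{t}\right)\|f_j\| \ \preceq\ \sum_{j\geq 0}2^{j(2m-M+\beta)}\,\|f_j\| .
\end{align*}
The interchange of $\sum_j$ with the Bochner integral defining $Kf$ is justified a posteriori by this absolute convergence.

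Next I would write $2^{j(2m-M+\beta)}=2^{j(2m-M)}\cdot 2^{j\beta}$ and apply H\"older's inequality in the index $j$ with exponents $q,q'$:
\begin{align*}
\|Kf\| \ \preceq\ \left(\sum_{j\geq 0}2^{j(2m-M)q'}\right)^{1/q'}\left(\sum_{j\geq 0}2^{j\beta q}\,\|f_j\|^q\right)^{1/q}.
\end{align*}
Here the hypothesis $M>2m$ enters decisively: since $2m-M<0$, the first factor is a convergent geometric series. It then remains to show $\big(\sum_{j\geq 0}2^{j\beta q}\|f_j\|^q\big)^{1/q}\preceq \|\mathcal G_{m,q,\beta}(f)\|$, i.e., by (\ref{$g$-function}), $\sum_{j}2^{j\beta q}\|f_j\|^q\preceq \int_0^\infty t^{(m-\beta)q}\big\|\frac{\partial^m}{\partial t^m}T_tf\big\|^q\,\frac{dt}{t}$; for $q=\infty$ the $\ell^q$--$\ell^{q'}$ splitting is replaced by pulling $\sup_j 2^{j\beta}\|f_j\|$ out of the convergent series $\sum_j 2^{j(2m-M)}$.

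For this block-norm inequality I would use two ingredients. First, a band-limited ``sampling'' bound: the support condition $\mathrm{supp}\,\hat\psi_j\subseteq[2^{j-1},2^{j+1}]$ lets one factor $\hat\psi_j(A)=g_j(A)\circ\big(\tfrac{\partial^m}{\partial t^m}T_t\big)\big|_{t=2^{-j}}$ with $g_j(\lambda)=(-1)^m\hat\psi_j(\lambda)\lambda^{-m}e^{2^{-j}\lambda}$, and that same support condition forces $\|g_j(A)\|_{op}\preceq 2^{-jm}$; hence $2^{j\beta}\|f_j\|\preceq (2^{-j})^{m-\beta}\,\big\|\tfrac{\partial^m}{\partial t^m}T_tf\big|_{t=2^{-j}}\big\|$. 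Second, $t\mapsto\big\|\tfrac{\partial^m}{\partial t^m}T_tf\big\|$ is non-increasing, since $\tfrac{\partial^m}{\partial s^m}T_s\big|_{s=t+h}=T_h\circ\tfrac{\partial^m}{\partial t^m}T_t$ and each $T_h$ is a contraction. Combining the two, on each dyadic interval $[2^{-j-1},2^{-j}]$ one has $t^{(m-\beta)q}\big\|\tfrac{\partial^m}{\partial t^m}T_tf\big\|^q\succeq (2^{-j})^{(m-\beta)q}\big\|\tfrac{\partial^m}{\partial t^m}T_tf\big|_{t=2^{-j}}\big\|^q$, so integrating $\frac{dt}{t}$ over these disjoint intervals and summing over $j\geq 0$ gives the desired inequality (and likewise for $q=\infty$). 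Chaining everything through yields $\|Kf\|\preceq\|\mathcal G_{m,q,\beta}(f)\|$. The boundedness of $K$ on $\mathcal H$ comes out of the same chain run with $q=q'=2$, using $\sum_j\|f_j\|^2=\|f\|^2$; alternatively, the Gamma-integral computation above identifies $K$ with a constant multiple of $A^{2m-M+\beta}$, which makes the boundedness transparent.

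The step I expect to be the main obstacle is precisely this passage between the continuous $g$-function integral and the dyadic sum: a direct Minkowski--H\"older estimate on $\int_0^\infty$ diverges at $t=\infty$, so one is forced to discretize first in order to harvest the geometric decay $2^{j(2m-M)}$ furnished by $M>2m$, and the return to the integral rests on the band-limited sampling bound together with the monotonicity of $\big\|\tfrac{\partial^m}{\partial t^m}T_tf\big\|$, neither of which is immediate. The remaining points --- convergence of the Bochner integral, the interchange of $\sum_j$ and $\int_0^\infty$, and the exact range of $\beta$ for which $Kf$ is defined --- are routine once the absolute bound $\sum_j 2^{j(2m-M+\beta)}\|f_j\|<\infty$ is in hand.
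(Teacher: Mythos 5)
Your proof is correct, but it follows a genuinely different route from the paper's. The paper also begins with the dyadic decomposition $Kf=\sum_j\hat\psi_j(A)Kf_j$, but then applies H\"older's inequality \emph{in the time variable} $t$ inside each block: it splits $\bigl(\tfrac{\partial^m T_t}{\partial t^m}\bigr)^2\hat\psi_j(A)$ into $\tfrac{\partial^m T_t}{\partial t^m}$ acting on $f$ (whose $L^q(t^{(m-\beta)q}\,dt/t)$-norm is exactly $\|\mathcal G_{m,q,\beta}(f)\|$) times the operator norm of $\tfrac{\partial^m T_t}{\partial t^m}\hat\psi_j(A)$ (whose $L^p(t^{(M-m)p}\,dt/t)$-norm is $\preceq 2^{-j(M-2m)}$ by Lemma \ref{fundamental estimation}), so the $g$-function norm pops out immediately and uniformly in $j$, and only the geometric series $\sum_j 2^{-j(M-2m)}$ remains. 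You instead integrate out $t$ completely first, reducing to the purely discrete bound $\|Kf\|\preceq\sum_j 2^{j(2m-M+\beta)}\|f_j\|$, apply H\"older \emph{in the index} $j$, and then must supply an extra ingredient the paper never needs: the dyadic sampling inequality $2^{j\beta}\|f_j\|\preceq (2^{-j})^{m-\beta}\|\partial_t^m T_t f|_{t=2^{-j}}\|$ (via the factorization $\hat\psi_j(A)=g_j(A)\circ\partial_t^mT_t|_{t=2^{-j}}$) combined with the monotonicity of $t\mapsto\|\partial_t^mT_tf\|$ to reconstruct the continuous $g$-function integral from its values at dyadic times. That lemma is correct as you state it and is of independent interest --- it is essentially one half of the equivalence between the discrete Littlewood--Paley norm (\ref{normequiv}) and the $g$-function norm, i.e., a fragment of Theorem \ref{Stein-LPW-decomp} itself --- but it makes your argument longer than necessary for this lemma alone. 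What your route buys in exchange is transparency: the identification $K=c\,A^{2m-M+\beta}$ and the explicit exponent $2^{j(2m-M+\beta)}$ make visible exactly when $K$ is bounded and why $M>2m$ and $M>\beta$ are needed, and you correctly diagnose (as the paper's proof also implicitly relies on) that a direct H\"older estimate on $\int_0^\infty$ without dyadic localization fails at $t=\infty$. One small reassurance: your H\"older split of the weight as $t^{(m-\beta)}\cdot t^{(M-m)}$ is the self-consistent one; the paper's display (\ref{eq2}) carries a typo ($t^{(M-\beta)q}$ where $t^{(m-\beta)q}$ is meant), so do not be misled by it.
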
 
   \begin{proof}  By the  partition of unity  in 
(\ref{resolution of identity}), 
for any $f\in \mathcal H$  we have 

\begin{align}\label{commutative-relation} 
Kf= \sum_{j\geq 0} \hat\psi_j^2(A) Kf= \sum_{j\geq 0}   \hat\psi_j(A)Kf_j , 
\end{align}

\noindent where $f_j= \hat \psi_j(A)f$. 
Notice that above we used the fact that  $K$ commutes with  $\hat\psi_j(A)$. 
   We continue as follows: First let $1<q<\infty$. Then 
 
\begin{align}\notag
\|K f_j\| & \leq \int_0^\infty  t^{M-\beta}  \left\|
\left(\cfrac{\partial^m T_{t}}{\partial t^m}\right)^2 \hat\psi_j(A)f\right\| \cfrac{dt}{t} \\\label{eq1}
&\leq  \int_0^\infty  t^{M-\beta} \left\|\cfrac{\partial^m T_{t}}{\partial t^m}f\right\|  \ \left\| \cfrac{\partial^m T_{t}}{\partial t^m}\hat\psi_j(A)\right\|_{op}\cfrac{dt}{t} \\\label{eq2}
& \leq \left(\int_0^\infty  t^{(M-\beta)q} \left\|\cfrac{\partial^m T_{t}}{\partial t^m}f\right\|^q  \cfrac{dt}{t}   \right)^{1/q}
 \left(\int_0^\infty  t^{(M-m)p}
 \ \left\| \cfrac{\partial^m T_{t}}{\partial t^m}\hat\psi_j(A)\right\|_{op}^p
  \cfrac{dt}{t}\right)^{1/p}\\\label{upper-bound}
 & \preceq  \|\mathcal G_{m,q,\beta}(f)\| \left(\int_0^\infty  t^{(M-m)p}
 \ \left\| \cfrac{\partial^m T_{t}}{\partial t^m}\hat\psi_j(A)\right\|_{op}^p
\cfrac{dt}{t}\right)^{1/p}  
  \end{align}
\noindent  with $\cfrac{1}{p}+ \cfrac{1}{q} =1$. To pass from (\ref{eq1}) to (\ref{eq2}) we used  H\"older\rq{}s inequality. 
The inequality (\ref{upper-bound}) holds by the definition of Littlewood-Paley-Stein type $g$-functions.  By 
     Lemma \ref{fundamental estimation}, for $k=M-m$ we can write 
  
\begin{align}\label{second-int}
\int_0^\infty  t^{(M-m)p}
 \ \left\| \cfrac{\partial^m T_{t}}{\partial t^m}\hat\psi_j(A)\right\|_{op}^p
\cfrac{dt}{t}  \preceq 2^{-j(M-2m)p} , 
 \end{align}
where the constant in the inequality  is independent of $j$.  
By applying  this estimation for  (\ref{upper-bound}) we arrive at

\begin{align}\label{greater side}
\|K f_j\|  \preceq 2^{-j(M-2m)} \|\mathcal G_{m,q,\beta}(f)\|.
\end{align}

 \noindent We will use these estimations for $Kf_j$  in   (\ref{commutative-relation}) to complete the proof as follows.

\begin{align}\notag
\| Kf\| &= \|\sum_{j\geq 0} \hat\psi_j^2(A) Kf\| \\\label{call something} 
  &\leq \sum_j \|Kf_j\|  \| \hat\psi_j(A)\|_{op}. 
  \end{align} 
  
\noindent  Due to (\ref{resolution of identity}) we have  $\|\hat\psi_j(A)\|_{op}\leq 1$. Therefore, with (\ref{greater side}) 
  \begin{align} 
  (\ref{call something}) 
&\preceq  \|\mathcal G_{m,q,\beta}(f)\|  \sum_{j\geq 0} 2^{-j(M-2m)}  .
\end{align}
 
\noindent With the assumption  $M>2m$, the  sum is finite. Thus
\begin{align}
\| Kf\|  \preceq  \|\mathcal G_{m,q,\beta}\| , 
\end{align}
as desired. For $q=\infty$, we use the standard convention for the definition of the norm. 
%
 \end{proof}

 %

\begin{lemma}\label{an estimation for inner product}
Let $m, j$ be as in the above and $t, s > 0$. Then 

$$\left\langle\left(\cfrac{\partial^m T_{t}}{\partial t^m}\right)^2f_j, \left(\cfrac{\partial^m T_{s}}{\partial s^m}\right)^2f_j\right\rangle    \geq 2^{4m(j-1)} e^{-2^{j+2}(t+s)}\|f_j\|^2  .$$
\end{lemma}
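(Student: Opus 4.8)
The plan is to collapse the whole inner product into a single scalar spectral integral by functional calculus, and then use that $f_j$ is bandlimited to $[2^{j-1},2^{j+1}]$ to bound the integrand from below pointwise on that interval.

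First I would record, using the spectral representation from Section~\ref{notations}, that $\frac{\partial^m}{\partial t^m}T_t$ is the operator with symbol $(-1)^m\lambda^m e^{-\lambda t}$, so that $\left(\frac{\partial^m T_t}{\partial t^m}\right)^2$ has symbol $\lambda^{2m}e^{-2\lambda t}$ and is self-adjoint (the symbol is real and nonnegative). Since all operators in sight are functions of the single self-adjoint operator $A$, composing the factors for parameters $t$ and $s$ simply multiplies symbols, and moving one factor across the inner product introduces no sign. Hence by the spectral theorem
\[
\left\langle\Bigl(\tfrac{\partial^m T_{t}}{\partial t^m}\Bigr)^{2}f_j,\ \Bigl(\tfrac{\partial^m T_{s}}{\partial s^m}\Bigr)^{2}f_j\right\rangle
=\int_0^\infty \lambda^{4m}\,e^{-2\lambda(t+s)}\,d(\mu_\lambda f_j,f_j).
\]

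Next I would use that $f_j=\hat\psi_j(A)f$ with $\operatorname{supp}(\hat\psi_j)\subseteq[2^{j-1},2^{j+1}]$, so the \emph{positive} measure $d(\mu_\lambda f_j,f_j)$ is supported in $[2^{j-1},2^{j+1}]$ and has total mass $\int_0^\infty d(\mu_\lambda f_j,f_j)=\|f_j\|^2$. On that interval one has $\lambda^{4m}\geq (2^{j-1})^{4m}=2^{4m(j-1)}$, and since $t+s>0$ the map $\lambda\mapsto e^{-2\lambda(t+s)}$ is decreasing, so it is minimized at the right endpoint $\lambda=2^{j+1}$, giving $e^{-2\lambda(t+s)}\geq e^{-2\cdot 2^{j+1}(t+s)}=e^{-2^{j+2}(t+s)}$. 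Inserting these two pointwise lower bounds into the integral and pulling the constants out yields
\[
\int_0^\infty \lambda^{4m}e^{-2\lambda(t+s)}\,d(\mu_\lambda f_j,f_j)
\;\geq\; 2^{4m(j-1)}\,e^{-2^{j+2}(t+s)}\int_0^\infty d(\mu_\lambda f_j,f_j)
= 2^{4m(j-1)}\,e^{-2^{j+2}(t+s)}\,\|f_j\|^2,
\]
which is precisely the asserted estimate.

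The computation is short, so there is no real obstacle; the only points needing a little care are (i) the bookkeeping that justifies rewriting the inner product of a product of four derivative operators applied to $f_j$ as one scalar integral against $d(\mu_\lambda f_j,f_j)$ — this uses that the symbols are nonnegative, so self-adjointness moves a factor across the inner product without a sign — and (ii) choosing the correct endpoint of the support when bounding the exponential below, namely $\lambda=2^{j+1}$, which is what produces the factor $e^{-2^{j+2}(t+s)}$ rather than $e^{-2^{j}(t+s)}$.
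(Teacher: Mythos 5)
Your proof is correct and follows essentially the same route as the paper: express the inner product via the spectral theorem, use that $d(\mu_\lambda f_j,f_j)$ is a positive measure supported in $[2^{j-1},2^{j+1}]$ with total mass $\|f_j\|^2$, and bound $\lambda^{4m}$ and the exponential from below at the appropriate endpoints of that interval. Your single scalar integral $\int_0^\infty \lambda^{4m}e^{-2\lambda(t+s)}\,d(\mu_\lambda f_j,f_j)$ is in fact a cleaner rendering of the double spectral integral the paper writes, but the mathematical content is identical.
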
 

\begin{proof} 
By the spectral theory and  the assumption on the support of $\hat \psi_j$ 
  we have 
  
\begin{align}\notag
\left\langle\left(\cfrac{\partial^m T_{t}}{\partial t^m}\right)^2f_j, \left(\cfrac{\partial^m T_{s}}{\partial s^m}\right)^2f_j\right\rangle&=   \left\langle \int_{\lambda>0}  \lambda^{2m} e^{-2t\lambda} d\mu_\lambda f_j, \int_{w>0}  w^{2m}e^{-2s w} d\mu_w f_j\right\rangle \\\notag
  &= \left\langle \int_{\lambda>0}  \lambda^{2m}  e^{-2t\lambda}\hat\psi_j(\lambda) d\mu_\lambda f, \int_{w>0}   w^{2m}  e^{-2s w} \hat\psi_j(w) d\mu_w f\right\rangle  \\\notag
& =
\left\langle \int_{\lambda=2^{j-1}}^{2^{j+1}}  \lambda^{2m} e^{-2t\lambda} \hat\psi_j(\lambda) d\mu_\lambda f , \int_{w=2^{j-1}}^{2^{j+1}}    w^{2m}  e^{-2s w} \hat\psi_j(w)     d\mu_w f\right\rangle\\\notag
 &=  \int_{\lambda=2^{j-1}}^{2^{j+1}}  \lambda^{2m}   e^{-2 t\lambda }\  d \left(
 \int_{w=2^{j-1}}^{2^{j+1}}  w^{2m}  e^{-2sw} d\langle \mu_\lambda f_j, \mu_w f_j\rangle  \right)\\\label{estimation-x}
 &\geq 2^{4m(j-1)}
  e^{-2^{j+2}(t+s) }\int_{\lambda=2^{j-1}}^{2^{j+1}}  d\left(
 \int_{w=2^{j-1}}^{2^{j+1}}     d\langle \mu_\lambda f_j, \mu_w f_j\rangle  \right) .
 \end{align}


 Next we aim to show that the integral on the right is $\|f_j\|^2$: 
Notice by the decomposition (\ref{spectral representation}) and the spectral theory we can write 
 \begin{align}\label{decomp for psi(A)} 
 \hat\psi_j(A) f=   \int_{\lambda=0}^\infty  \hat \psi_j(\lambda) d\mu_\lambda f = 
\int_{\lambda=2^{j-1}}^{2^{j+1}} \hat \psi_j(\lambda) d\mu_\lambda f = 
\int_{\lambda=2^{j-1}}^{2^{j+1}}   d\mu_\lambda f_j 
  \quad \quad  \forall \ f\in \mathcal H. 
  \end{align}  

\noindent Therefore, for any $g\in \mathcal H$

$$\langle \hat\psi_j(A) f , g\rangle  = \int_{\lambda=2^{j-1}}^{2^{j+1}}   d\langle \mu_\lambda f_j, g\rangle .$$ 

\noindent  Take $g=f_j=\hat\psi_j(A) f$. The preceding equation with the decomposition (\ref{decomp for psi(A)}) implies that

\begin{align}\label{inner-product}
 \int_{\lambda=2^{j-1}}^{2^{j+1}}  d\left(
 \int_{w=2^{j-1}}^{2^{j+1}}     d\langle \mu_\lambda f_j, \mu_w f_j\rangle  \right) = 
 \|f_j\|^2 .
 \end{align} 
  

\noindent  With interfering  (\ref{inner-product})   in (\ref{estimation-x}), the proof holds: 

 \begin{align}\notag
\left\langle\left(\cfrac{\partial^m T_{t}}{\partial t^m}\right)^2f_j, \left(\cfrac{\partial^m T_{s}}{\partial s^m}\right)^2f_j\right\rangle  \geq 2^{4m(j-1)} e^{-2^{j+2}(t+s)}\|f_j\|^2  .
\end{align}
\end{proof} 

  \begin{lemma}\label{lower bound theorem}  For $M>2m$, $M>2q\alpha-\beta$ and  $f\in \mathcal H$ 
  
 \begin{align}\label{something} 
 \|f_j\|   \preceq 2^{-j(4m-M+\beta)/2}  \|K f_j\| ,  
 \end{align} 
   where $f_j = \hat\psi(A)f$, $j\geq 0$, and  $K$ is the operator defined in Lemma \ref{Kf}.
   
  \end{lemma}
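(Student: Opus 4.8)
The goal is a lower bound on $\|f_j\|$ in terms of $\|Kf_j\|$, which is the "hard direction" complementing Lemma \ref{Kf}. The natural starting point is to pair $Kf_j$ with itself and use the pointwise lower bound from Lemma \ref{an estimation for inner product}. Write
\begin{align}\notag
\|Kf_j\|^2 = \left\langle Kf_j, Kf_j\right\rangle = \int_0^\infty\int_0^\infty (ts)^{M-\beta}\left\langle \left(\cfrac{\partial^m T_t}{\partial t^m}\right)^2 f_j, \left(\cfrac{\partial^m T_s}{\partial s^m}\right)^2 f_j\right\rangle\ \frac{dt}{t}\frac{ds}{s},
\end{align}
and then apply Lemma \ref{an estimation for inner product} to bound the inner product from below by $2^{4m(j-1)}e^{-2^{j+2}(t+s)}\|f_j\|^2$. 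Pulling this out of the double integral gives
\begin{align}\notag
\|Kf_j\|^2 \geq 2^{4m(j-1)}\|f_j\|^2\left(\int_0^\infty t^{M-\beta}e^{-2^{j+2}t}\ \frac{dt}{t}\right)^2.
\end{align}

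\textbf{Evaluating the integral.} The inner integral is a Gamma-function computation: $\int_0^\infty t^{M-\beta-1}e^{-2^{j+2}t}\,dt = \Gamma(M-\beta)\,2^{-(j+2)(M-\beta)}$, which requires $M>\beta$ (which is implied by $M>2m$ together with the eventual choice of $\beta$, and in any case should be noted as a hypothesis). Substituting, I get
\begin{align}\notag
\|Kf_j\|^2 \succeq 2^{4m(j-1)}\, 2^{-2(j+2)(M-\beta)}\|f_j\|^2 = c\, 2^{j(4m - 2(M-\beta))}\|f_j\|^2,
\end{align}
with $c$ independent of $j$. Taking square roots and rearranging yields $\|f_j\| \preceq 2^{-j(4m - M + \beta)}2^{?}\|Kf_j\|$ — I need to be careful with the exponent arithmetic: $4m - 2(M-\beta) = 2(2m - M + \beta)$, so $\|Kf_j\|^2 \succeq 2^{2j(2m-M+\beta)}\|f_j\|^2$, hence $\|f_j\| \preceq 2^{-j(2m - M + \beta)}\|Kf_j\| = 2^{j(M - \beta - 2m)/1}\cdots$. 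Comparing with the target exponent $-(4m-M+\beta)/2 = (M - \beta - 4m)/2$, I should double-check that the claimed exponent in \eqref{something} matches; if there is a discrepancy it is a typo in the constant and the argument still goes through with whatever exponent the integral actually produces. The role of the hypothesis $M > 2q\alpha - \beta$ is not used in this lemma itself — it will be invoked later when summing over $j$ to control the Besov norm, so I would simply not use it here.

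\textbf{Main obstacle.} The genuinely delicate point is the justification of interchanging the inner product with the double integral (Fubini/Tonelli for Hilbert-space-valued integrals) and the legitimacy of applying the pointwise lower bound of Lemma \ref{an estimation for inner product} \emph{inside} the integral. Since the integrand $\langle(\partial_t^m T_t)^2 f_j,(\partial_s^m T_s)^2 f_j\rangle$ is, by the spectral computation in that lemma, equal to a genuine (nonnegative, by taking $t=s$ on the diagonal — actually one must check sign) iterated integral against the positive measure $d\langle\mu_\lambda f_j,\mu_w f_j\rangle$, everything in sight is built from nonnegative quantities after the spectral substitution, so Tonelli applies and the manipulations are rigorous. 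I would spell out that $\left\langle(\partial_t^m T_t)^2 f_j,(\partial_s^m T_s)^2 f_j\right\rangle = \int\int \lambda^{2m}w^{2m}e^{-2t\lambda-2sw}\,d\langle\mu_\lambda f_j,\mu_w f_j\rangle \geq 0$ because the product structure makes the double spectral integral a square-type expression supported on $[2^{j-1},2^{j+1}]^2$, then bound each exponential factor below on that box, exactly as in the previous lemma, and only afterwards integrate in $t,s$. With that bookkeeping in place the estimate \eqref{something} follows immediately.
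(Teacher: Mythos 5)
Your proposal follows essentially the same route as the paper's proof: expand $\|Kf_j\|^2$ as a double integral of inner products, insert the lower bound from Lemma \ref{an estimation for inner product}, factor the result into the square of a Gamma-type integral, and rescale by $2^{j+2}$. Your exponent bookkeeping is also right to flag a discrepancy — the paper's own computation pulls only a single factor $2^{-(j+2)(M-\beta)}$ out of the \emph{squared} integral, so the correct exponent is $2(2m-M+\beta)$ rather than $4m-M+\beta$, but this only shifts the constraint on $M$ and does not affect the argument's structure.
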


  \begin{proof}



By the definition of the operator $K$ we have 

\begin{align}\label{lower-bound}
\|K f_j\|^2  &=  \int_{t=0}^\infty \int_{s=0}^\infty  (ts)^{M-\beta}  \left\langle\left(\cfrac{\partial^m T_{t}}{\partial t^m}\right)^2f_j, \left(\cfrac{\partial^m T_{s}}{\partial s^m}\right)^2f_j\right\rangle \frac{dt}{t} \frac{ds}{s}.   
\end{align}

\noindent Then  (\ref{something}) is an immediate result  of  Lemma \ref{an estimation for inner product} and the following calculations. 

\begin{align}\notag
\|Kf_j\|^2 &     
  \geq  \int_{t=0}^\infty \int_{s=0}^\infty  (ts)^{M-\beta}   2^{4m(j-1)} e^{-2^{j+2}(t+s)}\|f_j\|^2
\frac{dt}{t} \frac{ds}{s} \\\notag
& = 2^{4m(j-1)}   \|f_j\|^2  \int_{t=0}^\infty \int_{s=0}^\infty    (ts)^{M-\beta}  e^{-2^{j+2}(t+s)}
 \frac{dt}{t} \frac{ds}{s} \\\notag
&= 2^{4m(j-1)} \|f_j\|^2 \left(\int_{t=0}^\infty  t^{M-\beta}   e^{-2^{j+2}t}
\frac{dt}{t}\right)^2 \\\notag
&= 2^{4m(j-1)} 2^{-(j+2)(M-\beta)} \|f_j\|^2 \left(\int_{t=0}^\infty  t^{M-\beta}  e^{-t}
\frac{dt}{t} \right)^2 \\\notag
& = c  \ 2^{j(4m-M+\beta)} \|f_j\|^2 , 
\end{align}
where $c= 2^{-2(2m + M-\beta)}$.  This completes the proof of the  lemma.

\end{proof}

\begin{proof}[{\bf Proof of Theorem \ref{Stein-LPW-decomp}}] 
 
 First we show that for $1<q<\infty$ 
 
  $$\|f\|_{B_{\alpha}^q} \preceq \|f\|+  \left(\int_0^\infty  t^{(m-\beta)q} \left\|\cfrac{\partial^m T_{t}}{\partial t^m}f_j\right\|^q   \frac{dt}{t}\right)^{1/q} .
 $$
 
 \noindent As a result of  Lemma \ref{lower bound theorem} and the inequality   (\ref{greater side})   we have 

 \begin{align}\notag
 2^{j(4m-M+\beta)/2} \| f_j\|   \preceq \|Kf_j\| \preceq   2^{-j(M-2m)} \left(\int_0^\infty  t^{(m-\beta)q} \left\|\cfrac{\partial^m T_{t}}{\partial t^m}f\right\|^q 
 \frac{dt}{t}\right)^{1/q} . 
\end{align}

\noindent Equivalently, 


 \begin{align}\notag
2^{j\alpha q}\| f_j\|^q   \preceq
  2^{-j/2\left(M+\beta-2\alpha q\right)}  \int_0^\infty  t^{(m-\beta)q} \left\|\cfrac{\partial^m T_{t}}{\partial t^m}f\right\|^q \frac{dt}{t} .
\end{align}
By summing over $j$ and the assumptions on $M$ and $\alpha$, 
 we arrive to the following inequality: 

 \begin{align}\notag
\sum _{j\geq 0}\left(2^{\alpha j}\| f_j\|\right)^q\preceq  \int_0^\infty  t^{(m-\beta)q} \left\|\cfrac{\partial^m T_{t}}{\partial t^m}f\right\|^q \frac{dt}{t} .
\end{align}
 Therefore

 \begin{align}\label{greater}
 \|f\|_{B_{\alpha}^q} \preceq \|f\|+  \left(\int_0^\infty  t^{(m-\beta)q} \left\|\cfrac{\partial^m T_{t}}{\partial t^m}f\right\|^q \frac{dt}{t}\right)^{1/q} .
 \end{align}
 %
%
  
  We   show that    the  converse of (\ref{greater}) is also true.  
By 
  the decomposition (\ref{resolution of identity}) and 
  an application of H\"older\rq{}s inequality for  $q$,  $p=\frac{q}{q-1}$, 
    we have
%
 \begin{align}
 \left\|\cfrac{\partial^m T_{t}}{\partial t^m}f\right\| &\leq
  \sum_{j\geq 0} \left\|\cfrac{\partial^m T_{t}}{\partial t^m}\hat\psi_j(A)f_j\right\|\\\notag
  &\leq \sum_{j\geq 0} \left\|\cfrac{\partial^m T_{t}}{\partial t^m}\hat\psi_j(A)\right\|_{op} \ \left\|f_j\right\|\\\notag
 &\leq  \left(\sum_{j\geq 0} 2^{jq\alpha}\left\|f_j\right\|^q\right)^{1/q} 
 \left(\sum_{j\geq 0} 2^{-jp\alpha}\left\|\cfrac{\partial^m T_{t}}{\partial t^m}\hat\psi_j(A)\right\|_{op}^p\right)^{1/p}   \\\notag
 &=  \|f \|_{B_q^\alpha} 
 \left(\sum_{j\geq 0} 2^{-jp\alpha}\left\|\cfrac{\partial^m T_{t}}{\partial t^m}\hat\psi_j(A)\right\|_{op}^p\right)^{1/p} 
  \end{align}
 Consequently, 
 \begin{align}\label{right integral}
 \int_0^\infty  t^{(m-\beta)q} \left\|\cfrac{\partial^m T_{t}}{\partial t^m}f\right\|^q \frac{dt}{t} &\leq
  \|f \|_{B_q^\alpha}^q
   \int_0^\infty   t^{(m-\beta)q} \left(\sum_{j\geq 0} 2^{-jp\alpha}\left\|\cfrac{\partial^m T_{t}}{\partial t^m}\hat\psi_j(A)\right\|_{op}^p \right)^{q-1} \frac{dt}{t} . 
  \end{align}

In the rest, we show that the integral on the right hand side of (\ref{right integral}) is finite and independent of $j$. We shall do this as follows.  
By the  functional calculus 
\begin{align}\notag  
\left\|\cfrac{\partial^m T_{t}}{\partial t^m}\hat\psi_j(A)\right\|_{op}= t^{-m}\sup_{\lambda}|  (\lambda t)^{m}e^{-\lambda t} \hat\psi_j(\lambda)| .
\end{align}

 Recall that   $|\hat \psi_j(\lambda)|\leq 1$ for all  $\lambda$ in the support of $\hat \psi_j$. This, along with  
 the decay property of $e^{-x}$,    gives us   the following two    estimations:  
\begin{align}\label{first}
\left\|\cfrac{\partial^m T_{t}}{\partial t^m}\hat\psi_j(A)\right\|_{op}  \preceq t^{-m} ,
\end{align}
 and  
 \begin{align}\label{second}
 \left\|\cfrac{\partial^m T_{t}}{\partial t^m}\hat\psi_j(A)\right\|_{op}\preceq 2^{m(j+1)}
 e^{-2^{j-1}t}. 
 \end{align}
 
Let $\epsilon>0$. Take $\beta\in \Bbb R$  such that   $q\beta+1>0$.
Then by  applying    (\ref{first})   
  in (\ref{right integral}), for the integral  over $(\epsilon,\infty)$  the following holds: 
\begin{align}\notag
\int_\epsilon^\infty t^{(m-\beta)q} \left(\sum_{j\geq 0} 2^{-jp\alpha}\left\|\cfrac{\partial^m T_{t}}{\partial t^m}\hat\psi_j(A)\right\|_{op}^p \right)^{q-1} \frac{dt}{t } 
    &\leq  \int_\epsilon^\infty t^{(m-\beta)q}  \left(\sum_{j\geq 0} 2^{-jp\alpha} t^{-mp} \right)^{q-1}  \frac{dt}{t} \\\notag
& =\left(\sum_{j\geq 0} 2^{-jp\alpha}   \right)^{q-1}  \int_\epsilon^\infty  t^{(m-\beta)q} t^{-mq} \cfrac{dt}{t}  \\\label{star}
&= \left(\sum_{j\geq 0} 2^{-jp\alpha}   \right)^{q-1} \int_\epsilon^\infty t^{-q\beta} \frac{dt}{t}  <\infty . 
 \end{align}

     To prove that the integral on the left   is also     finite on $(0,\epsilon)$,  we proceed  as follows. 
   %
Pick  $a$ such that     $0<a\leq m-1$. Then for any  $\lambda>1/2$ we have
       \begin{align}\label{exponential}
       t^m e^{-\lambda t} \leq   (1+(\lambda t^{-1})^a)^{-1} = \cfrac{t^a}{\lambda^a+t^a}\ ,  \quad 0<t<\epsilon .
       \end{align}

\noindent  Using the  inequality  (\ref{first}) once again, we  prove   that the    integral on the right side of  (\ref{right integral}) is finite   on $(0,\epsilon)$: 
Take $a$ sufficiently  large such that $a>\beta$ and $\alpha+a>m$.    Then  

\begin{align}\label{1}
&\int_0^\epsilon  t^{(m-\beta)q} \left(\sum_{j\geq 0} 2^{-jp\alpha}\left\|\cfrac{\partial^m T_{t}}{\partial t^m}\hat\psi_j(A)\right\|_{op}^p \right)^{q-1} \frac{dt}{t} \\\label{2} 
&\preceq
\int_0^\epsilon   t^{-\beta q} \left(\sum_{j\geq 0} 2^{-jp\alpha}   \left(\sup_{2^{j-1}\leq\lambda\leq 2^{j+1}}s^m(1+(s t^{-1})^a)^{-1}\right)^p \right)^{q-1}  \frac{dt}{t} \\\notag
&\preceq
\int_0^\epsilon   t^{-\beta q} \left(\sum_{j\geq 0} 2^{-jp\alpha}  2^{jmp} (2^{j}t^{-1})^{-ap} \right)^{q-1} \frac{dt}{t} \\\label{double-star}
&= 
   \left(\sum_{j\geq 0} 2^{-jp(\alpha+a-m)} \right)
\int_0^\epsilon   t^{(a-\beta)q} \ \frac{dt}{t}<\infty .
\end{align}

\noindent Notice that to pass from (\ref{1}) to (\ref{2}) we used   the estimations (\ref{second}) and (\ref{exponential}), respectively.  
  Consequently,  
   (\ref{double-star})  with  (\ref{star}) implies  that  the integral on right had side of (\ref{right integral})  is finite. Thus

$$\|f\|+  \left(\int_0^\infty t^{(m-\beta)q}  \left\| \cfrac{\partial^m T_t}{\partial t^m} f\right\|^q \cfrac{dt}{t} \right)^{1/q} \preceq
\|f\|_{B_q^\alpha} ,$$
 and we  have completed the proof for Theorem \ref{Stein-LPW-decomp}. 
\end{proof}

\section{applications} 
 
We conclude  this paper  by  illustrating our results in  two  examples.

 \begin{example} 
{\rm  
 The  Cauchy-Poisson  semigroup $\{P_t\}_{t\geq 0}$ on $L^2(\Bbb R^n)$, with the convention $P_0:=I$,  is given by  
 $$P_tf(x)= \int_{\RR^n} f(y) p_t(x-y) dy$$
 with the Poisson kernel 
 $$p_t(x)= c_n \cfrac{t}{(|x|^2+t^2)^{\frac{n+1}{2}}}, \quad (x,t)\in \RR^n\times (0,\infty).  $$ 
The constant $c_n$  is chosen  such that $\int p_t(x) dx =1$. The Poisson semigroup illustrates a notion of convolution semigroup and $P_tf= f\ast p_t$. 
Let $\phi^1$ denote the first derivative of the Poisson kernel for the upper half space  at time $t=1$: 
  $$\phi^{1} (x)= \cfrac{\partial}{\partial t}p_t(x)|_{t=1}~ .$$
The function  $\phi^1$ is integrable  with  mean value zero, i.e., $\int \phi^1(x) dx= 0$. 
      Similarly,  let 
  $$\phi^m(x) := \cfrac{\partial^m}{\partial t^m}p_t(x)|_{t=1}\quad \ \  m\geq 2 .$$
      It is  easy to show  that $\cfrac{\partial^m}{\partial_m t}P_tf= f\ast \phi_t^m$ where
  $\phi^m_t(x)= t^{-n}\phi^m(\frac{x}{t})$, $t>0$.  
 $\{P_t\}_{t\geq 0}$ is a symmetric diffusion semigroup $($see \cite{Stein70}$)$,  
and

 $$\|f\|_{B_q^\alpha} \asymp  \|f\| +\left(\int_0^\infty t^{(m-\alpha)q} \left\|   f\ast \phi_t^m\right\|^q \frac{dt}{t}\right)^{1/q} .$$   
 } 
 \end{example}

 \begin{example}
 {\rm 
 Let $\mathcal H=L^2(\RR^n)$. The {\it heat semigroup} $\{T_t\}_{t\geq 0}$ is defined by the Gauss-Weierstrass formula 
 \begin{align} 
 T_t f(x)=   \int_{\RR^n} f(y) h_t(x-y) dy , \quad \quad x\in \Bbb R^n, \ t>0
 \end{align}
where $h_t$ is the  heat kernel  
 $$h_t(x)= \ c_n  t^{-n/2}e^{\frac{-|x|^2}{4t}} , \quad  \quad t>0.
$$
We set $T_0:=I$. 
Here,   $c_n= \cfrac{1}{(4\pi)^{n/2}}$  and $\int_{\RR^n} h_t(x) dx=1.$
By the definition, $T_t$, $t>0$, is a convolution operator and $T_t f= f\ast h_t$. By Young\rq{}s inequality $\|T_tf\|\leq \|f\|$, and it is easy to  verify that $\{T_t\}_{t\geq 0}$ is a diffusion semigroup.  (The    semigroup axiom $T_{t+s} = T_tT_s$  can be obtained by using the Fourier transform.)  For the heat semigroup $\{T_t\}_{t\geq 0}$, the Besov norm $\|f\|_{B_q^\alpha}$  is thus  equivalent to 

$$\|f\| +\left(\int_0^\infty t^{(m-\alpha/2)q} \left\| f\ast \cfrac{\partial^m}{\partial t^m}h_t\right\|^q \frac{dt}{t}\right)^{1/q} $$
for any $m$ with $2m>\alpha$ and $\beta= \alpha/2$. 
}
 \end{example}

 \makeatletter
\renewcommand{\@biblabel}[1]{\hfill#1.}\makeatother

   \end{document}